\newtheorem{thm}{Theorem}[section]
\newtheorem{lem}[thm]{Lemma}
\newtheorem{hypo}[thm]{Hypothesis}
\newtheorem{res}[thm]{Result}
\numberwithin{equation}{section}
\def\pn{\par\noindent}
\newcommand{\Lt}{\mathcal{L}}
\newcommand{\Lnt}{\mathcal{L}_n}
\newcommand{\gnt}{\mathcal{G}_n}
\newcommand{\gntij}{\mathcal{G}^{ij}_n}
\newcommand{\Dtauk}{D_{\tau}^k}
\newcommand{\Drs}{D_{r}X_{s}}
\newcommand{\Drti}{D_{r}X^i_{t}}
\newcommand{\Dnrti}{D_{r}(X^n_{t})^i}
\newcommand{\Dnrsi}{D_{r}(X^n_{s})^i}
\newcommand{\Drsj}{D^j_{r}X_{s}}
\newcommand{\Dnrsj}{D^j_{r}X^{n}_{s}}
\newcommand{\Dnt}{DX^{n}_{t}}
\newcommand{\Dnrt}{D_{r}X^n_{t}}
\newcommand{\Dnrs}{D_{r}X^n_{s}}
\newcommand{\DD}{D^{j,k}_{r,\tau}}
\newcommand{\bpsi}{\nabla b^i(X_{s})}
\newcommand{\bnps}{\nabla b_{n}(X^{n}_{s})}
\newcommand{\bnpsi}{\nabla b^i_{n}(X^{n}_{s})}
\newcommand{\fnpsi}{\nabla f^i(X^{n}_{s})}
\newcommand{\fpsi}{\nabla f^i(X_{s})}
\newcommand{\R}{\mathbb{R}}
\newcommand{\Q}{\mathbb{Q}}
\newcommand{\E}{\mathbb{E}}
\newcommand{\D}{\mathbb{D}}
\begin{document}
%

\title{Weak differentiability of Solutions to SDEs with Semi-Monotone Drifts}
\author{Mahdieh Tahmasebi$^*$ and Shiva Zamani}

\thanks{{\scriptsize
\hskip -0.4 true cm MSC(2010): Primary: 60H07; Secondary: 60H10.
\newline Keywords: stochastic differential equation, semi-monotone drift, Malliavin calculus.\\
$*$Corresponding author
}}

\maketitle

\begin{abstract}
In this work we prove Malliavin differentiability for the solution to an SDE with locally Lipschitz and semi-monotone drift.  
To this end we construct a sequence of SDEs with globally Lipschitz drifts. We show that the solutions of these SDEs converge to the solution of the original SDE and the $p$-moments of their Malliavin derivatives are uniformly bounded.
\end{abstract}
\vskip 0.2 true cm


\pagestyle{myheadings}
\markboth{\rightline {\scriptsize  Tahmasebi and Zamani}}
         {\leftline{\scriptsize Weak differentiability of Solutions to SDEs}}

\bigskip
\bigskip

\section{Introduction}
\vskip 0.4 true cm
In recent years, there were attemps to generalize existence, uniqueness, and smoothness results  to SDEs with non-globally Lipschitz coefficients, which  have many applications in Financial Mathematics \cite{Higa04, Bavou06, Alos08, Marco1}. 
In \cite{Flandoli10, Zhang10a} the authors studied the existence of a global stochastic flow for SDEs with unbounded and H\"older continuous drifts and  nondegenerate diffusion coefficients. Zhang considered the flow of stochastic transport equations which could have irregular coefficients \cite{Zhang10b}.  \\
The SDE we consider has both nonglobally Lipschitz and 
semi-monotone drift. Such equations  come mostly from finance, biology, and  dynamical systems and are more challenging when considered on infinite dimensional spaces. (see e.g. \cite{B99, Z95, GM07})\\
In this paper, we consider an SDE with locally Lipschitz and monotone drift and globally Lipschitz diffusion. We prove the existence of a unique infinitely Malliavin differentiable strong solution to this SDE. \\
Since the drift of the SDE we consider is not globally Lipschitz, we will construct a sequence of SDEs with globally Lipschitz drifts whose solutions are Malliavin differentiable of all order.  In this way we can apply the classical Malliavin calculus to these solutions. Then we can find  a uniform bound for the moments of all the Malliavin derivatives of solutions. We will prove that the solutions to the constructed sequence of SDEs converge to the solution of the desired SDE. Then by the uniform boundedness of the moments of the mentioned solutions and the convergence result we are able to prove  infinite Malliavin differentiability of  the solution to the original SDE. \\  
The organization of the paper is as follows. In section 2, we recall some basic results from Malliavin calculus that will be used in the paper,  the prerequisites could be found in Nualart's book \cite{Nualart06},  in this section we state also our assumptions and  main results. Section 3  involves the construction of our approximating SDEs with globally Lipschitz coefficients, and the proof of convergence of their solutions to the unique solution of the original SDE (\ref{equa}). In section 4, we will prove uniform boundedness of the Malliavin derivatives associated to the approximating processes, which results to the infinitely weak differentiability of the solution to  SDE  (\ref{equa}). 
\section{Some basic results from Malliavin calculus}\label{ma}
\vskip 0.4 true cm
Let $\Omega$ denote the Wiener space
$C_{_{0}}([0,T];R^d)$. We  furnish $\Omega$ with the
$\parallel.\parallel_{_{\infty}}$-norm making it a (separable)
Banach space. Consider $(\Omega,\mathcal{F},P)$ a complete
probability space, in which $\mathcal{F}$ is generated by the open
sets of the Banach space, $W_{t}$ is a d-dimensional Brownian motion, and $\mathcal{F}_{t}$ is the filtration generated by $W_t$. \\
Consider the Hilbert space $H:=L^2([0,T];\R^d)$.
Let $\{W(h), h \in H\}$ denote a Gaussian
process associated to the Hilbert space $H$ and
$W(h)=\int_{0}^{\infty}{h(t)dW_t}$.
We denote by $C_{_{p}}^\infty(\R^n)$ the set of all infinitely
continuously differentiable functions $f:R^n \longrightarrow R$ such
that $f$ and all of its partial derivatives have polynomial growth.
Let $\mathcal{S}$ denote the class of all smooth random variables $F:\Omega \longrightarrow \R$ such that 
$F=f(W(h_{_{1}}),...,W(h_{_{n}})),$
for some $f$ belonging to $C_{_{p}}^\infty(\R^n)$ and $h_{_{1}},...,h_{_{n}} \in H$
for some $n \geq 1$.\\
 The derivative of the smooth random variable
$F \in \mathcal{S}$ is an $H$-valued random variable given by
\begin{equation*}
D_t F=\Sigma_{i=1}^n
{\partial_{_{i}}}f(W(h_{_{1}}),...,W(h_{_{n}}))h_{_{i}}(t).
\end{equation*}
The operator  
$D$  from $L^p(\Omega)$ to $L^p(\Omega, H)$ is closable. For every $p \geq 1$, we denote its domain by ${\D}^{1,p}$ which is exactly the
closure of $\mathcal{S}$ with respect to
$\parallel.\parallel_{_{1,p}}$ where
\begin{equation*} \qquad \parallel F\parallel_{_{1,p}}
                         =\Big[E\vert F\vert^p+\parallel
                                DF\parallel_{L^p(\Omega; H)}^p\Big]^\frac{1}{p}.
\end{equation*}
(see \cite{Nualart06}). 
One can also define the $k$-th order derivative of $F$ as a random vector in $[0,T]^k \times \Omega$. We denote by $\D^{k,p}$  the completion of  $\mathcal{S}$ with respect to the norm
\begin{equation*} 
\parallel F\parallel_{_{k,p}}
                         =\Big[E\vert F\vert^p+\parallel
                                D^{i_1, \cdots, i_k} F\parallel_{L^p(\Omega; H^{\otimes k})}^p\Big]^\frac{1}{p},
\end{equation*}
and define $\D^{\infty}:= \bigcap_{k,p} \D^{k,p}$.\\
Consider the following stochastic differential equation\\
\begin{equation}\label{equa}
dX_{t}=[b(X_{t})+f(X_{t})] dt+\sigma(X_{t})dW_{t},  \qquad X_0=x_0.
\end{equation}
where  $b,f:\R^d \longrightarrow \R^d$ are measurable functions and  $\sigma:\R^d \longrightarrow M_{d\times d}(\R)$ is a measurable $C^\infty$ function. 
We denote by $\Lt$ the second-order differential operator associated to SDE (\ref{equa}):
\begin{equation*}
\Lt= \frac{1}{2} \sum_{i,j =1}^{d} (\sigma \sigma^*)_{j}^i(x) \partial_{i} \partial_j +  \sum_{i=1}^d [b^i(x)+f^i(x)]\partial_i
\end{equation*}
where $*$ denotes transpose. We use the upper index to show a specified row, and the subindex to show a specified column of a matrix.\\
Kusouka and Stroock  has shown the following result \cite[Theorem 1.9.]{Kusuoka85}.
\begin{res}\label{solindinf}
Assume that the coefficients $b$, $\sigma$ and $f$ in {\rm (\ref{equa})} are globally Lipschitz and
all of their derivatives have polynomial growth, then {\rm (\ref{equa})} has a strong solution in
$\D^{\infty}$ whose Malliavin derivative satisfies the following linear equation. For every $r  \leq t$\\

$\begin{aligned}
D_rX_t^i  &= \sigma^i(X_r)+\int_{r}^{t}(\nabla b^i(X_s)+\nabla f^i(X_s)) D_rX_s ds \\
& + \int_{r}^{t} \nabla \sigma_l^i (Y_s)D_rX_s dW^l_s.
\end{aligned}$\\
where for  $r  > t$, $D_rX_t=0$.
Also it holds
\begin{equation*}
\sup_{0 \leq r \leq T}\E[ \sup_{r \leq s \leq T}\vert D_r^jX^i \vert] < \infty.
\end{equation*}
\end{res}
 Throughout the paper we assume that $b$, $f$ and $\sigma$ satisfy the following Hypothesis.
\begin{hypo}\label{hypothesis}

\begin{enumerate}
 \item  The function $b$ is an $C^\infty$ uniformly monotone function, i.e., there exists a constant $K>0$ such that 
 for every $x,y \in \R^d$,
\begin{equation}\label{monoton}
<b(y)-b(x),y-x> \leq -K{\vert y-x \vert}^2.
\end{equation}
 where $\langle . , . \rangle$ denotes the scalar product in $\R^d$. Furthermore, $b$ is locally Lipschitz and all of its derivatives have polynomial growth.  i.e.,
 for each $x \in \R^d$ and each multiindex $\alpha$ with $\vert \alpha \vert = m$, there exist positive constants $\gamma_m$ and $q_m$ such that
 \begin{equation}\label{roshdb}
 \vert \partial_\alpha b(x) \vert^2 \leq \gamma_m(1+\vert x \vert^{q_m})
 \end{equation}
 Also, set $\xi:= \max_{m \geq 1} q_m < \infty$.
 \item  The functions $f$ and $\sigma$ are $C^\infty$, globally Lipschitz with Lipschitz constant $k_1$ and all of their derivatives  are bounded. Furthermore $f$ has linear growth, i.e. for every $x \in \R^d$,
 \begin{equation}\label{linear}
 \vert f(x) \vert \leq k_1(1+\vert x \vert).
 \end{equation}
\end{enumerate}
\end{hypo}
Hypothesis (\ref{hypothesis}) yields to the following useful inequalities 
\begin{equation}\label{Bound}
\langle b(a)+f(a) ,a \rangle \vee \vert \sigma(a)\vert^2 \leq \alpha + \beta \vert a \vert^2   \qquad  \forall a \in \R^d,
\end{equation}
where
\begin{equation}\label{alpha-betta}
\alpha := \frac12 \vert b(0)\vert^2 +k_1^2 \vee 2 \vert \sigma(0)\vert ^2,
\quad \textmd{and} \quad \beta := ( -K + 1+ k_1^2) \vee 2k_1^2,
\end{equation}
and 
\begin{equation}\label{ggg}
\langle \nabla b(x)y, y \rangle \leq -K \vert y \vert^2  \qquad \forall x,y \in \R^d.
\end{equation}
It is well-known that by inequality (\ref{Bound}), the SDE (\ref{equa}) has a strong solution $\{X_t\}$ (see e.g., \cite{Mao97} and \cite{Mao12}). The uniqueness of the solution is obtained  using It\^o's formula and Gronwall's inequality (Lemma \ref{xlp}). We will show that this solution is in $\D^{\infty}$. To this end, we first show that $X_t \in L^P(\Omega)$,  does not  explode in finite time, and then we construct an almost everywhere convergent sequence of processes $X^n_t$ whose limit is $X_t$, where all the Malliavin derivatives of $X^n_t$ are uniformly bounded  with respect to $n$.
\section{Approximation of the solution}
\vskip 0.4 true cm
For each $n \geq 1$, define the stopping time $\tau_{n}$ via
$$\tau_{n}:=inf\{t|~ \vert X_{t}\vert \geq n^\xi \}.$$
\begin{lem}\label{xlp}
For each $t \in [0,T]$ and $p > 1$, the unique solution $X_{t}$ of (\ref{equa})
 belongs to $L^p(\Omega)$ and does not explode in finite time.
\end{lem}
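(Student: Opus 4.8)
The plan is to apply It\^o's formula to $|X_t|^2$ (or more precisely to $|\Xtn|^2$ so that all the stochastic integrals are genuine martingales), use the key a priori bound (\ref{Bound}) to control the drift, and then close the estimate with Gronwall's inequality; a standard localization argument then removes the stopping time and simultaneously shows non-explosion. Concretely, I would first fix $n$ and write, by It\^o's formula applied on $[0, t\wedge\tau_n]$,
\begin{equation*}
|\Xtn|^2 = |x_0|^2 + \int_0^{t\wedge\tau_n}\big(2\langle b(X_s)+f(X_s),X_s\rangle + |\sigma(X_s)|^2\big)\,ds + \int_0^{t\wedge\tau_n} 2\langle X_s, \sigma(X_s)\,dW_s\rangle.
\end{equation*}
The stochastic integral is a martingale because the integrand is bounded on $[0,\tau_n]$, so taking expectations kills it, and the bound (\ref{Bound}) gives $\E|\Xtn|^2 \le |x_0|^2 + (2\alpha+\beta)T + (2\beta+1)\int_0^t \E|\Xtn|^2\,ds$ (absorbing $|\sigma|^2 \le \alpha+\beta|X_s|^2$ into the same terms). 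Gronwall's inequality then yields $\sup_n \sup_{t\le T}\E|\Xtn|^2 \le C$ with $C$ independent of $n$.

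Next I would pass to the limit $n\to\infty$. Since $|X_{\tau_n}| \ge n^\xi$ on $\{\tau_n \le T\}$, the uniform $L^2$ bound gives $P(\tau_n \le T) \le \E|\Xtn|^2 / n^{2\xi} \le C/n^{2\xi} \to 0$, hence $\tau_n \uparrow \infty$ a.s. and the solution does not explode in finite time. By Fatou's lemma applied along $\tau_n \uparrow T$ (or by monotone convergence on the event $\{\tau_n > T\}$), $\E|X_t|^2 \le \liminf_n \E|\Xtn|^2 \le C < \infty$, so $X_t \in L^2(\Omega)$.

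To upgrade to $X_t \in L^p(\Omega)$ for all $p > 1$, I would repeat the argument with $|\Xtn|^p$ in place of $|\Xtn|^2$: It\^o's formula produces the drift term $p|\Xtn|^{p-2}\langle b+f, X\rangle$ together with the second-order term $\tfrac{p}{2}|\Xtn|^{p-2}|\sigma|^2 + \tfrac{p(p-2)}{2}|\Xtn|^{p-4}\langle X,\sigma\rangle^2$, and both are bounded by $C(1+|\Xtn|^p)$ using (\ref{Bound}) (the monotonicity of $b$ in fact gives a favorable sign, but linear growth of the relevant quantities is all one needs). Taking expectations, using the Burkholder--Davis--Gundy inequality to handle the martingale part (or simply that the localized stochastic integral has zero expectation), and applying Gronwall again gives $\sup_n\sup_{t\le T}\E|\Xtn|^p \le C_p$, and Fatou finishes the claim. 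The main technical point to be careful about is the martingale property of the stochastic integrals: this is exactly why one works with the stopped process $\Xtn$ throughout and only removes the localization at the very end via the uniform moment bounds; there is no substantive obstacle beyond this bookkeeping.
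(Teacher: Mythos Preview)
Your argument for the $L^p$ bounds and non-explosion is essentially the paper's: apply It\^o's formula to $|\Xtn|^p$, control the drift via (\ref{Bound}), take expectations so the localized stochastic integral vanishes, and close with Gronwall; then extract $P(\tau_n\le T)\to 0$ from the uniform second moment and pass to the limit with Fatou. The only cosmetic difference is that the paper obtains $\Lt|X_t|^p\le\beta_p|X_t|^p+\alpha_p|X_t|^{p-2}$ and then runs an induction on $p$, while you absorb $|X|^{p-2}\le 1+|X|^p$ directly into a single Gronwall step; either works.

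What you are missing is \emph{uniqueness}. The paper explicitly defers uniqueness to this lemma (see the sentence just before the statement of Lemma~\ref{xlp}), and its proof concludes by taking two strong solutions $X_t,Y_t$, applying It\^o's formula to $|X_t-Y_t|^2$, using the monotonicity (\ref{monoton}) together with the Lipschitz property of $f$ and $\sigma$ to get $\tfrac{d}{dt}\E|X_t-Y_t|^2\le(-2K+2k_1)\E|X_t-Y_t|^2$, and finishing with Gronwall. This is routine, but it is part of what the lemma asserts and you should include it.
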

\begin{proof}
To proceed, first we use Fatou's lemma to show that $X_t \in L^p(\Omega)$ and does not explode. Then, we prove the uniqueness of the solution to SDE (\ref{equa}). \\
By the definition of $\Lt$ and (\ref{Bound}), we have
\begin{align}
\Lt \vert X_t \vert^p & = p\vert X_t \vert^{p-2} \langle X_t  , b(X_t)+f(X_t) \rangle + \frac{p}{2} \vert X_t \vert^{p-2}\vert \sigma(X_t) \vert^2 \nonumber\\
&+ \frac{p(p-2)}{2} \vert X_t \vert^{p-4}\vert \langle X_t, \sigma(X_t) \rangle \vert^2 \nonumber\\
& \leq  p\vert X_t \vert^{p-2} \langle X_t  , b(X_t)+f(X_t) \rangle + \frac{p(p-1)}{2} \vert X_t \vert^{p-2}\vert \sigma(X_t) \vert^2 \nonumber\\
   & \leq p\Big(\beta+(p-1)k_1^2\Big) \vert X_t \vert^{p} + p\Big(\alpha+(p-1)k_1^2\Big) \vert X_t \vert^{p-2} \nonumber\\
   & =: \beta_p  \vert X_t \vert^{p}+\alpha_p  \vert X_t \vert^{p-2}.\label{Lt}
\end{align}
Applying It\^o's formula and using (\ref{Lt}),
\begin{equation}\label{induc}
\frac{d}{dt} \E\Big[\vert X_{t \wedge \tau_n} \vert^p \Big] = \E\Big[ \Lt \vert X_{t \wedge \tau_n}\vert^p \Big]  \leq  \beta_p \E\Big[\vert X_{t \wedge \tau_n} \vert^{p} \Big] + \alpha_p  \E\Big[ \vert X_{t \wedge \tau_n} \vert^{p-2} \Big] .
\end{equation}
Setting $p=2$ and using Gronwall's inequality, we have
\begin{equation}\label{xtwedge}
\E\Big[\vert X_{t \wedge \tau_n} \vert^2 \Big] \leq  \vert x_0 \vert^2 \alpha_2 exp\{ \beta_2 T\}.
\end{equation}
From (\ref{xtwedge}) we can deduce  the following inequality
\begin{equation*}
(\frac{n}{2}-1)^{\frac{1}{q_0}} P\Big(t \geq \tau_n\Big)  \leq  \vert x_0 \vert^2 \alpha_2 exp\{ \beta_2 T\}.
\end{equation*}
Letting $n$ tend to $\infty$, then $lim_{n \rightarrow \infty}\tau_{n}=\infty$ almost surely, which implies that $X_t$ does not explode in any finite time interval $[0,T]$.
Also, let $n$ tend to infinity in (\ref{xtwedge}) and use Fatou's lemma, then
\begin{equation*}
\E(\vert X_t \vert^2)  \leq \E\big(\displaystyle \liminf_{n \rightarrow \infty} \vert X_{t \wedge \tau_{n}} \vert^2\big)
                        \leq  \displaystyle \liminf_{n \rightarrow \infty} \E\big(\vert X_{t \wedge \tau_{n}} \vert^2\big)
                        \leq  \vert x_0 \vert^2 \alpha_2 exp\{ \beta_2 T\}.
\end{equation*}
Finally by (\ref{induc}) and induction on $p$
 we conclude that $X_{t}\in L^p(\Omega)$.\\
 To prove uniqueness, we assume that the SDE (\ref{equa}) has two strong solutions $X_t$ and $Y_t$. Since $X_t, Y_t \in L^2(\Omega)$, applying It\^o's formula we have  \\

$\begin{aligned}
\frac{d}{dt} \E\Big[\vert X_t - Y_t \vert^2 \Big] &= 2\E\Big[ \langle X_t - Y_t , b(X_t)-b(Y_t) \rangle\Big]\\
& + 2\E\Big[ \langle X_t - Y_t , f(X_t)-f(Y_t) \rangle \Big] \\
&+\E\Big[\vert \sigma(X_t)-\sigma(Y_t) \vert^2   \Big]
\end{aligned}$\\
From which by (\ref{monoton}) and the Lipschitz property of $\sigma$ and $f$ we derive
\begin{equation*}
\frac{d}{dt} \E\Big[\vert X_t - Y_t \vert^2 \Big] \leq (-2K+2k_1) \E\Big[ \vert X_t - Y_t \vert^2 \Big].
\end{equation*}
By Gronwall's inequality which is proved in \cite[Lemma 1.1]{Hasminskii80} we conclude that $\E\Big[\vert X_t - Y_t \vert^2 \Big] =0$. So that
\begin{equation*}
P\Big(  \vert X_t - Y_t \vert =0 \quad  \textmd{for all} ~t \in \Q \cap [0,T]  \Big)=0,
\end{equation*}
where $\Q$ denotes the set of rational numbers. Since $t \longrightarrow \vert X_t -Y_t \vert $ is continuous, then 
 \begin{equation*}
P\Big(  \vert X_t - Y_t \vert =0   \quad  \textmd{for all}~ t \in  [0,T]  \Big)=0,
\end{equation*}
and  the uniqueness is proved.
\end{proof}
For every integer $n >0$ let us choose some smooth functions $\phi_{n}:\R \rightarrow \R$ such that $\phi_{n}=1$ on $A_{n}:=\{x \in \R;~\mid
x\mid \leq n^{\xi} \}$ and $\phi_{n}=0$ outside  $A_{2n^{\xi}}$ ($\xi$ defined in Hypothesis \ref{hypothesis} part (1))  and for each multiindex $L$ with $\vert L \vert =l\geq 1$,
\begin{equation}\label{supderbphi}
\sup_{_{n,x}}\Big( \vert \partial_{_{L}} \phi_n \vert +\vert \langle b , \partial_{L} \phi_{n} \rangle \vert \Big)\leq M_{l}
\end{equation}
 for some $M_{l} > 0$. (See Appendix and the proof of Lemma 2.1.1 in \cite{Nualart06}).
  Now, set
$$b_{n}(x):=\phi_{n}(x)b(x)$$ for every $x \in \R^d$ and  $n >0$. Then $b_{n}$ would be globally Lipschitz and continuously differentiable. By (\ref{roshdb}) for each $x \in \R^d$ and each multiindex $L$ with $\vert L \vert = l$, there exist positive constants $\Gamma_l$ and $p_l$ such that
 \begin{equation}\label{roshdbn}
 \vert \partial_L b_n(x) \vert^2 \leq \Gamma_l(1+\vert x \vert^{p_l}).
 \end{equation}
Now by Result \ref{solindinf}, the SDE (\ref{equan}) has a strong solution  in $\D^{\infty}$, that is, there exists $X_t^n$ in $\D^{\infty}$ which satisfies 
\begin{equation}\label{equan}
X_{t}^n=x_{0}+\int_{0}^{t} [b_{n}(X_{s}^n)+f(X_{s}^n)]ds+\int_{0}^{t}
\sigma(X_{s}^n)dW_{s}
\end{equation}
We   denote by $\Lnt$  the infinitesimal operator associated to  SDE (\ref{equan}):
\begin{equation*}
\Lnt= \frac{1}{2} \sum_{i,j =1}^{d} (\sigma \sigma^*)^i_{_{j}}(x) \partial_{i} \partial_j+\sum_{i=1}^d [b^i_n(x)+f^i(x)]\partial_i.
\end{equation*}

We will show that the sequence $X_t^n$ converges to the unique strong solution $X_t$ to the SDE (\ref{equa}) and that the moments of $DX_t^n$ are uniformly
bounded with respect to $n$ and $t$. This way we can use  Lemma 1.2.3 in \cite{Nualart06} to
derive the Malliavin differentiability of $X_t$ and show that $X_t \in \D^{\infty}$.
\begin{lem}
For each $t \in [0,T]$ and $p > 1$, the sequence $X_{t}^n$
converges to $X_{t}$ in $L^p(\Omega)$.
\end{lem}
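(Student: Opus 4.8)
The plan is to use that $b_n$ coincides with $b$ on $\{x\in\R^d:\ |x|\le n^\xi\}$, which forces $X^n$ to coincide with $X$ up to the stopping time $\tau_n$, and then to quench the remaining tail using the non-explosion of $X$ together with moment bounds for $X^n$ that are uniform in $n$. For the first point, set $\theta_n:=\inf\{t:\ |X^n_t|\ge n^\xi\}$. For $s<\theta_n$ one has $|X^n_s|<n^\xi$, hence $\phi_n(X^n_s)=1$ and $b_n(X^n_s)=b(X^n_s)$, so by continuity $X^n$ solves (\ref{equa}) on $[0,\tau_n\wedge\theta_n]$. Thus $X$ and $X^n$ are two solutions of (\ref{equa}), stopped at $\tau_n\wedge\theta_n$ and bounded there by $n^\xi$; running verbatim the uniqueness argument of Lemma \ref{xlp} (It\^o's formula applied to $|X_{t\wedge\tau_n\wedge\theta_n}-X^n_{t\wedge\tau_n\wedge\theta_n}|^2$, the monotonicity (\ref{monoton}), the global Lipschitz property of $f$ and $\sigma$, and Gronwall's inequality, the stochastic integral being a true martingale because its integrand is bounded on the stopped interval) yields $X_t=X^n_t$ for all $t\le\tau_n\wedge\theta_n$, a.s. Comparing the first passage times to the level $n^\xi$ of these now-identical paths forces $\tau_n=\theta_n$ a.s., whence $X_t=X^n_t$ for every $t\le\tau_n$.

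Consequently $|X_t-X^n_t|=|X_t-X^n_t|\,\mathbf 1_{\{\tau_n<t\}}$, so by the Cauchy--Schwarz inequality and the elementary bound $|a-b|^{2p}\le C_p(|a|^{2p}+|b|^{2p})$,
\[
\E\big[|X_t-X^n_t|^p\big]\ \le\ C_p^{1/2}\big(\E|X_t|^{2p}+\E|X^n_t|^{2p}\big)^{1/2}\,P(\tau_n\le t)^{1/2}.
\]
Now $\E|X_t|^{2p}<\infty$ by Lemma \ref{xlp}, and $\sup_n\E|X^n_t|^{2p}<\infty$: from $0\le\phi_n\le1$, (\ref{monoton}) and the linear growth of $f$ one checks that $b_n$ satisfies (\ref{Bound}) with constants $\tilde\alpha,\tilde\beta$ that do not depend on $n$, so repeating the computation (\ref{Lt})--(\ref{xtwedge}) with $\Lnt$ in place of $\Lt$ and the obvious localization (legitimate because $X^n\in\D^\infty$ does not explode, by Result \ref{solindinf}), followed by Gronwall and Fatou exactly as in Lemma \ref{xlp}, bounds $\E|X^n_t|^q$ uniformly in $n$ for every $q>1$. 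Finally $P(\tau_n\le t)\to0$ as $n\to\infty$ since $\tau_n\to\infty$ a.s. (Lemma \ref{xlp}), so the right-hand side tends to $0$; hence $X^n_t\to X_t$ in $L^p(\Omega)$.

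The main obstacle is the pathwise identification $X=X^n$ on $[0,\tau_n]$, in particular the bootstrap $\tau_n=\theta_n$: a priori $X^n$ might exit the ball of radius $n^\xi$ strictly before $\tau_n$, so one really must first localize at $\tau_n\wedge\theta_n$, invoke uniqueness there, and only then compare the exit times. If one wishes to avoid this identification, an alternative is to apply It\^o's formula to $|X_t-X^n_t|^2$ for the unstopped processes, split $b(X_s)-b_n(X^n_s)=\big(b(X_s)-b(X^n_s)\big)+\big(1-\phi_n(X^n_s)\big)b(X^n_s)$, estimate the first term by (\ref{monoton}) and the second --- which is supported on $\{|X^n_s|\ge n^\xi\}$ --- by the polynomial growth (\ref{roshdb}), Chebyshev's inequality and the uniform moment bounds above, and close with Gronwall; this is more computational but gives the same conclusion.
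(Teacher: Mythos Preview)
Your proof is correct and follows the same route as the paper: identify $X^n$ with $X$ on $[0,\tau_n]$ via pathwise uniqueness, establish the uniform moment bound $\sup_n\E|X^n_t|^q<\infty$, and combine these to conclude. The only cosmetic difference is in the final step: the paper phrases it as almost-sure convergence (from $\tau_n\to\infty$) plus uniform integrability (from the moment bound), whereas you use the equivalent Cauchy--Schwarz estimate $\E|X_t-X^n_t|^p\le C\,P(\tau_n\le t)^{1/2}$. Your treatment of the identification step via the auxiliary stopping time $\theta_n$ and the bootstrap $\tau_n=\theta_n$ is in fact more careful than the paper's, which asserts $X^n_t=X_{t\wedge\tau_n}$ rather tersely.
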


\begin{proof}
To proceed, we prove the almost sure convergence of $X_{t}^n$ to $X_t$. Then by showing the uniform integrablility of $X_{t}^n$ we will conclude. \\
Let $X^{\tau_{n}}$ denote $X$ stopped at $\tau_{n}$.
By the choice of $\phi_{n}(.)$, it follows that $X_{t}^{\tau_{2n}}=X_{t}^{\tau_{n}}$ for all $t \leq \tau_{n}$.
So, for fixed $t \in [0,T]$, letting $n$ tend to $\infty$, we
have $lim_{n \rightarrow \infty}X_{t}^{n}=lim_{n \rightarrow \infty}X_{t}^{\tau_{n}}=X_{t}$ a.s.\\
Now, we are going to prove the uniform integrability of the sequence $X_{t}^n$. We will show that for every $p > 1$, there exists $c_{p}>0$ such that
\begin{equation}\label{ssup}
\sup_{n \geq 1}\sup_{0 \leq t \leq T}\E\Big[\vert X_{t}^n\vert^p\Big] \leq
c_{p}.
\end{equation}
By the definition of $\Lnt$, we have

$\begin{aligned}
\Lnt \vert X^n_t -x_0\vert^p & = p\vert X_t^n -x_0\vert^{p-2} \langle X_t^n-x_0 , b_n(X_t^n)+f(X_t^n) \rangle \\
&+ \frac{p}{2} \vert X_t^n -x_0\vert^{p-2}\vert \sigma(X_t^n) \vert^2 \\
  & + \frac{p(p-2)}{2} \vert X_t^n -x_0\vert^{p-4}\vert \langle X_t^n -x_0, \sigma(X_t^n) \rangle \vert^2 \\
  & =  p\vert X_t^n -x_0\vert^{p-2}\langle X_t^n -x_0, b_n(X_t^n)-b(x_0)\phi_{n}(X_{t}^{n}) \rangle \\
& + p\vert X_t^n -x_0\vert^{p-2}\langle X_t^n -x_0, b(x_0)\phi_{n}(X_{t}^{n}) +f(X_t^n) \rangle\\
  &+ \frac{p}{2} \vert X_t^n -x_0\vert^{p-2}\vert \sigma(X_t^n) \vert^2 \\
  &+ \frac{p(p-2)}{2} \vert X_t^n -x_0\vert^{p-4}\vert \langle X_t^n -x_0, \sigma(X_t^n) \rangle \vert^2 
 \end{aligned}$\\ 
By use of inequality $-ac \leq {a^2}/{2}+{c^2}/{2}$ for
$a=K$ and $c=\phi_{n}(X_{t}^{n}) $, and because $\phi_n(.) \leq 1$, by (\ref{monoton}) and (\ref{Bound}) one can find  positive constants $\alpha_p$, $\beta_p$ such that 
\begin{align} 
 \Lnt \vert X^n_t -x_0\vert^p  & \leq  -Kp\vert X_t^n -x_0\vert^{p} \phi_{n}(X_{t}^{n})\nonumber \\
&  + p\vert X_t^n -x_0\vert^{p-2}\langle X_t^n -x_0, b(x_0)\phi_{n}(X_{t}^{n}) +f(X_t^n) \rangle\Big]\nonumber\\
  & + \frac{p(p-1)}{2} \vert X_t^n-x_0 \vert^{p-2}\vert \sigma(X_t^n) \vert^2 \nonumber \\
  & \leq \frac{K^2+1}{2}p\vert X_t^n -x_0\vert^{p} \phi_{n}(X_{t}^{n}) \nonumber\\
  &+p\vert X_t^n -x_0\vert^{p-2}\Big[\vert X_t^n -x_0\vert^2+\frac{1}{2}\Big(\vert b(x_0)\vert^2+\vert f(X_t^n)\vert^2\Big) \Big]\nonumber\\
  & + \frac{p(p-1)}{2} \vert X_t^n -x_0\vert^{p-2}\vert \sigma(X_t^n) \vert^2 \nonumber\\
     & \leq \alpha_p \vert X_t^n -x_0\vert^{p} + \beta_p \vert X_t^n -x_0\vert^{p-2}. \label{Ln}
\end{align}
Using It\^o's formula, we have\\

$\begin{aligned}
\frac{d}{dt} \E\Big[\vert X^n_t -x_0\vert^p\Big] &= \E\Big[\Lnt(\vert X^n_t -x_0\vert^p)\Big] \\
&\leq \alpha_p \E
\Big[\vert X_t^n -x_0\vert^{p}\Big] + \beta_p \E\Big[\vert X_t^n -x_0\vert^{p-2}\Big].
\end{aligned}$\\
Setting $p=2$ and applying Gronwall's inequality, (\ref{ssup}) will be proved for $p=2$. By induction on $p$ and 
by the following inequality \\

$\begin{aligned}
\frac{d}{dt} \E\Big[\vert X^n_t -x_0\vert^p \Big] & = \E\Big[\Lnt(\vert X^n_t -x_0\vert^p)\Big] \\
& \leq \alpha_p \E
\Big[\vert X_t^n -x_0\vert^{p}\Big] + \beta_p \Big(\E\Big[\vert X_t^n -x_0\vert^{p-1}\Big]\Big)^{1-\frac{1}{p-1}},
\end{aligned}$\\
(\ref{ssup}) will be proved for every $p \geq 2$. \\
Now by  almost sure convergence of $X_t^n$ to 
$X_t$  and by inequality (\ref{ssup}) the proof of Lemma is completed.
\end{proof}

\section{Weak differentiability in the Wiener space}
\vskip 0.4 true cm
In this section, first  we use Lemma 1.2.3 in \cite{Nualart06} to
derive Malliavin differentiability of the solution to (\ref{equa}). Then we show that $X_t \in \D^{\infty}$. Notice that by Result \ref{solindinf}, the solutions to SDEs (\ref{equan}) are in $\D^\infty$.

\begin{lem}\label{firstderiv}
Assume that Hypothesis {\rm \ref{hypothesis}} holds, then  the unique strong
solution of SDE {\rm (\ref{equa})} is in $\D^{1,p}$ for every $p>1$. Moreover, for $r \leq t$ \\

$\begin{aligned}
\Drti  & =\sigma^i(X_{r})+\int_{r}^{t}[\nabla b^i(X_{s})+\nabla f^i(X_{s})].\Drs
ds\\
&+\int_{r}^{t}\nabla \sigma^i_l(X_{s}).\Drs dW^l_{s},
\end{aligned}$\\
and  for $r > t$, $\Drti =0$, where $\sigma_l(X_{s})$ is the l-th column of $\sigma(X_{s})$ and $u.C$ denotes the product $C^*u$ of  matrix $C^*$  and vector $u$.
\end{lem}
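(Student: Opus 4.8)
The strategy is to pass to the limit in the linear SDEs satisfied by the Malliavin derivatives $D_r X_t^n$, using the uniform $L^p$-bounds on $X_t^n$ already established together with a uniform $L^p$-bound on $D_r X_t^n$, and then invoke the closability criterion (Lemma 1.2.3 in \cite{Nualart06}): if $X_t^n \to X_t$ in $L^p(\Omega)$ and $\sup_n \|D X_t^n\|_{L^p(\Omega;H)} < \infty$, then $X_t \in \D^{1,p}$ and $D X_t^n$ converges to $D X_t$ weakly in $L^p(\Omega;H)$. So the bulk of the work is the uniform bound on the Malliavin derivatives of the approximating processes, and the identification of the limiting equation.

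First I would recall from Result \ref{solindinf} that, since $b_n$, $f$, $\sigma$ satisfy its hypotheses, $X_t^n \in \D^\infty$ and $D_r X_t^n$ solves the linear SDE
\begin{equation*}
\Dnrti = \sigma^i(X^n_r) + \int_r^t [\nabla b_n^i(X^n_s) + \nabla f^i(X^n_s)]\,\Dnrs\, ds + \int_r^t \nabla \sigma^i_l(X^n_s)\,\Dnrs\, dW^l_s .
\end{equation*}
The key structural fact is inequality (\ref{ggg}): since $b_n = \phi_n b$ with $\phi_n \le 1$ and $\langle b, \partial_L \phi_n\rangle$ bounded by (\ref{supderbphi}), the matrix $\nabla b_n(x) = \phi_n(x)\nabla b(x) + b(x)\otimes \nabla\phi_n(x)$ still satisfies a one-sided bound $\langle \nabla b_n(x) y, y\rangle \le (-K\phi_n(x) + M_1)|y|^2 \le C|y|^2$ uniformly in $n$ and $x$. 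Then I would apply It\^o's formula to $|\Dnrti|^p$ (or $|\Dnrt|^p$), exactly as in the proof of Lemma \ref{xlp} and the preceding lemma: the drift term involving $\nabla b_n$ is controlled from above by the one-sided bound, the $\nabla f$ and $\nabla \sigma$ terms are handled by their global boundedness (Hypothesis \ref{hypothesis} part (2)), and the martingale part vanishes in expectation after localization. This yields a Gronwall inequality of the form $\frac{d}{dt}\E|\Dnrt|^p \le C_p\,\E|\Dnrt|^p$ with $C_p$ independent of $n$ and $r$, and since the initial datum $\sigma^i(X^n_r)$ has $L^p$-norm bounded uniformly in $n$ and $r$ by the linear growth of $\sigma$ and (\ref{ssup}), we get $\sup_n \sup_{r\le t\le T}\E|\Dnrt|^p < \infty$, hence $\sup_n \|DX_t^n\|_{L^p(\Omega;H)}^p = \sup_n \int_0^T \E|\Dnrt|^p\,dr < \infty$.

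With the uniform bound in hand, Lemma 1.2.3 of \cite{Nualart06} gives $X_t \in \D^{1,p}$ and $D X_t^n \rightharpoonup D X_t$ weakly in $L^p(\Omega;H)$. It remains to identify the limit: I would pass to the limit in the linear SDE for $D_r X_t^n$. Using that $X_t^n \to X_t$ in $L^p$ and a.s., that $\nabla f$ and $\nabla\sigma$ are continuous and bounded, that $\nabla b_n(X_t^n) \to \nabla b(X_t)$ a.s. on the event $\{\limsup|X^n|<\infty\}$ (which has full probability by Lemma \ref{xlp}, since eventually $\phi_n \equiv 1$ near $X_t$), and combining this with the weak convergence of $D_r X_t^n$ and the uniform integrability coming from the $L^p$-bounds, one shows that the limit $D_r X_t$ satisfies the asserted equation; the It\^o-integral term passes to the limit in $L^2$ by the isometry and dominated convergence. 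The statement $D_r X_t = 0$ for $r>t$ is inherited from the approximations. The main obstacle is precisely the uniform-in-$n$ Malliavin bound: one must check carefully that the extra term $b(x)\otimes\nabla\phi_n(x)$ in $\nabla b_n$ does not destroy the dissipativity estimate, which is exactly what condition (\ref{supderbphi}) on $\langle b, \partial_L\phi_n\rangle$ is designed to guarantee, and that the polynomial growth of $\nabla b_n$ from (\ref{roshdbn}) is only ever used \emph{after} it has been absorbed by the one-sided bound, never as a two-sided estimate.
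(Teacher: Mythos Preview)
Your proposal is correct and follows essentially the same route as the paper: apply It\^o's formula to $|\Dnrti|^p$, use the decomposition $\nabla b_n=\phi_n\nabla b+b\otimes\nabla\phi_n$ together with (\ref{ggg}) and (\ref{supderbphi}) to obtain the one-sided bound $\langle\nabla b_n(x)y,y\rangle\le M_1|y|^2$ uniformly in $n$, bound the $\nabla f$ and $\nabla\sigma$ contributions by Hypothesis~\ref{hypothesis}(2), sum over $i$, and conclude by Gronwall and (\ref{ssup}); then invoke Lemma~1.2.3 of \cite{Nualart06}. The only difference is that you go on to sketch the identification of the limiting SDE for $D_rX_t$ by passing to the limit in the linear equation for $\Dnrt$, whereas the paper stops once the uniform bound (\ref{karand}) is established and leaves that passage implicit; your extra step is sound (strong $L^q$-convergence of $\nabla b_n(X^n_s)$ from a.s.\ convergence plus uniform moments, paired with weak convergence of $\Dnrs$) and in fact fills a point the paper does not spell out.
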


\begin{proof}
By Result \ref{solindinf} we know that for every $r \leq t$ and $1 \leq i\leq d$\\

$\begin{aligned}
\Dnrti & =\sigma^i(X^n_{r})+\int_{r}^{t}[\bnpsi+\fnpsi].\Dnrs
ds \\
&+\int_{r}^{t}\nabla \sigma^i_l(X_{s}^n).\Dnrs dW^l_{s},
\end{aligned}$\\
and for every $r > t$, $\Dnrti=0$.\\
Now by Lemma 1.2.3 in \cite{Nualart06}, it is sufficient to show that
\begin{equation}\label{karand}
\sup_{n \geq
1}\sup_{0 \leq t \leq T}\E\Big[\Vert \Dnt \Vert_{H}^p\Big] \leq c_{p}.
\end{equation}
To this end, note that for every $1 \leq i\leq d$ by It\^o's formula 
\begin{equation} \label{itoi}
\E\Big[\vert \Dnrti \vert^p\Big] =
      \E\Big[\vert \sigma ^i(X_{r}^n)\vert^p\Big]+\E\Big[\int_{r}^{t}\gnt \Big(\vert \Dnrsi \vert^p\Big)ds\Big]+\E\Big[M^n_t\Big] ,
\end{equation}
where
\\

$\begin{aligned}
\gnt\Big( \vert \Dnrsi \vert^p\Big) & = p \vert \Dnrsi \vert^{p-2}S_{i,s} \\
&+p\vert \Dnrsi \vert^{p-2}\langle \Dnrsi, \fnpsi.\Dnrs \rangle \\
   & + \frac{p}{2} \vert \Dnrsi \vert^{p-2}\vert \nabla \sigma^i_l(X_{s}^n).\Dnrs \vert^2 \\
      & +\frac{p(p-2)}{2} \vert \Dnrsi \vert^{p-4}\vert
 \langle \Dnrsi ,\nabla \sigma^i_l(X_{s}^n).\Dnrs \rangle \vert^2,\\
\end{aligned}$\\
\\
$$S_{i,s} :=\langle \Dnrsi, \bnpsi.\Dnrs \rangle,$$
and 
\begin{equation*} 
M^n_t:= \int_{r}^{t} p \vert \Dnrsi \vert^{p-2}
 \langle \Dnrsi ,\nabla \sigma^i_l(X_{s}^n).\Dnrs dW^l_s \rangle.
\end{equation*}
Notice that by Result \ref{solindinf},  $M_t^n$
is a local martingale and thus $\E[M_t^n]=0$.\\
Since $\sigma$ and $f$ have bounded derivatives, there exists some $\gamma > 0$ such that
\begin{align}
  \frac{p}{2} &\vert \Dnrsi \vert^{p-2} \vert \nabla \sigma^i_l(X_{s}^n).\Dnrs \vert^2  \nonumber\\
  & + \frac{p(p-2)}{2} \vert \Dnrsi \vert^{p-4}\vert
 \langle \Dnrsi ,\nabla \sigma^i_l(X_{s}^n).\Dnrs \rangle \vert^2   \nonumber\\
 & \leq   \gamma \frac{p(p-1)}{2} \vert \Dnrsi \vert^{p-2}\vert \Dnrs \vert^2,   \label{si}
\end{align}
and 
\begin{align}
  p\vert \Dnrsi \vert^{p-2}& \langle \Dnrsi, \fnpsi.\Dnrs \rangle \leq \nonumber\\
  &\frac{p}{2} \vert \Dnrsi \vert^{p}
 + \gamma \frac{p}{2} \vert \Dnrsi \vert^{p-2}\vert \Dnrs \vert^2.       \label{fi}
\end{align}
Using (\ref{ggg}) and   (\ref{supderbphi}), for  $0 \leq s \leq  T$ we have

\begin{align} 
   \sum_{i=1}^{d} S_{i,s} & =  \sum_{j=1}^{d} <\bnps \Dnrsj,\Dnrsj> \nonumber\\
   &=\sum_{j=1}^{d}\phi_{n}( X_{s}^{n})\langle \nabla b(X_{s}^{n}) \Dnrsj,\Dnrsj  \rangle    \nonumber\\
&+\sum_{j=1}^{d}\langle  \langle b(X_{s}^{n}),\nabla \phi_{n}(X_s^n) \rangle \Dnrsj,\Dnrsj \rangle  \nonumber\\
& \leq (-K \phi_{n}(X_{s}^{n})+M_1) \sum_{j=1}^{d}\vert \Dnrsj \vert^2  \leq M_1 \sum_{j=1}^{d}\vert \Dnrsj \vert^2                  \label{S1}
\end{align}
where $\Dnrsj$ is the $j$-th column of  $DX_s^n$. 
For every $Y=(Y^1, \cdots, Y^d) \in \R^d$ and $1 \leq i \leq d$ 
\begin{equation} \label{lpl}
\vert Y^i \vert^p \leq \vert Y \vert^p,
\end{equation}   
and
\begin{equation}\label{lp}
\vert Y \vert^p \leq 2^{\frac{p}{2}-1}\sum_{i}^{d} \vert Y^i \vert^p.
\end{equation} 
Thus substituting (\ref{S1}), (\ref{si}) and (\ref{fi}) in (\ref{itoi}) and taking summation on $i$ we derive: 

$\begin{aligned} 
\E\Big[\vert \Dnrt \vert^p\Big]  & \leq 2^{\frac{p}{2}-1}\sum_{i=1 }^{d} \E\Big[ \vert \Dnrti \vert^p \Big]  \\
&  \leq 2^{\frac{p}{2}-1}\sum_{i=1}^{d} \E\Big[\vert \sigma ^i(X_{r}^n)\vert^p\Big] \\
&+ 2^{\frac{p}{2}-1} pdM_1 \sum_{i=1}^{d}\int_{r}^{t}\E\Big[\vert \Dnrsi \vert^{p-2}\vert  \Dnrs \vert^2 \Big]ds \\
& +2^{\frac{p}{2}-1}\sum_{i=1}^{d}\int_{r}^{t}\E\Big[\frac{p}{2} \vert \Dnrsi \vert^{p}\Big]ds\\
& +2^{\frac{p}{2}-1}\sum_{i=1}^{d} \gamma \frac{p}{2}\int_{r}^{t}\E\Big[ \vert \Dnrsi \vert^{p-2}\vert \Dnrs \vert^2\Big]ds \\
 & +2^{\frac{p}{2}-1}\sum_{i=1}^{d} \gamma \frac{p(p-1)}{2}\int_{r}^{t}\E\Big[ \vert \Dnrsi \vert^{p-2}\vert \Dnrs \vert^2\Big]ds. \\
\end{aligned}$\\

Now we can find a constant $\alpha'_p > 0$ such that  

\begin{equation*}
\E\Big[\vert \Dnrt \vert^p\Big] \leq 2^{\frac{p}{2}-1}\sum_{i=1}^{d} \E\Big[\vert \sigma ^i(X_{r}^n)\vert^p\Big] + \alpha'_p \int_{r}^{t} \E\Big[\vert \Dnrs \vert^p \Big]ds .
\end{equation*}
Using  Gronwall's inequality, we have
\begin{equation*}
\E\Big[\vert \Dnrt \vert^p\Big] \leq 2^{\frac{p}{2}-1}\sum_{i=1}^{d} \E\Big[\vert \sigma ^i(X_{r}^n)\vert^p\Big] exp\{\alpha'_p T\}.
\end{equation*}
From which by the Lipschitz property of $\sigma$ and inequality (\ref{ssup}) the result follows. 
\end{proof}
Here we are going to prove higher order deffierentiability of $X_t$. For simplicity, we will only show the second order differentiability. For every real-valued function $f$ and random variables $F$ and $G$, set **$\triangle f(x) FG := \sum{i,j}\partial_{i}\partial_j f(x) F^i G^j$** and $\DD F= D^k_{\tau} D^{j}_r F$.

\begin{lem}\label{d2}
Assuming Hypothesis \ref{hypothesis}, the unique strong solution
of SDE {\rm (\ref{equa})} is in $\D^{2,p}$, for every $p>1$, and

$\begin{aligned}
\DD X^i_t   &=A^{ij}_{\tau, r} \\
&+\int_{\tau \vee r}^{t}\Big[\langle \nabla \sigma^i_l(X_{s}), \DD X_s \rangle +\triangle \sigma_l^i(X_{s})\Dtauk X_{s}\Drsj \Big] dW^l_{s} \\
  & + \int_{\tau \vee r}^{t} \langle \bpsi+\fpsi , \DD X_s \rangle ds\\
  & +\int_{\tau \vee r}^{t}\Big[\triangle b^i(X_{s})+\triangle f^i(X_{s})\Big]\Dtauk X_{s}\Drsj ds,\\
\end{aligned}$\\
where
\begin{equation*}
A^{ij}_{\tau, r}=\langle \nabla \sigma_j^i(X_{r}), \Dtauk X_{r} \rangle+\sum_{l=1}^{d}\langle \nabla \sigma^i_l(X_{\tau}), D_{r}^jX_{\tau} \rangle,
\end{equation*}
and $D_{\tau}X_{r}=0$ for $\tau > r$, and 
$D_{r}X_{\tau}=0$ for $\tau < r$.
\end{lem}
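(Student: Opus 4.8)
The plan is to follow exactly the same scheme that worked for the first-order derivative in Lemma \ref{firstderiv}, now applied one level up. Since each $X^n_t\in\D^\infty$ by Result \ref{solindinf}, the second Malliavin derivative $\DDnkrt$ exists and, by differentiating the linear SDE for $\Dnrti$ a second time (with respect to $D^k_\tau$), it satisfies the analogue of the displayed equation with $b,f,\sigma$ replaced by $b_n,f,\sigma$ and with the driving process $X^n$. The goal is then to establish a uniform bound
\begin{equation*}
\sup_{n\geq 1}\sup_{0\leq t\leq T}\E\Big[\Vert D^2 X^n_t\Vert_{H^{\otimes 2}}^p\Big]\leq c_p,
\end{equation*}
after which Lemma 1.2.3 in \cite{Nualart06}, together with the already-proved $L^p$ convergence $X^n_t\to X_t$ and the first-order bound (\ref{karand}), yields $X_t\in\D^{2,p}$ and identifies the limit SDE.

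First I would fix $i,j,k$ and $\tau\vee r\leq t$ and apply It\^o's formula to $\vert \DDnkrt\vert^p$, exactly as in (\ref{itoi}), producing a drift term with the quadratic form $\langle \DDnkrt,\bnpti\,\DDnkrt\rangle$, plus lower-order terms involving $\triangle b_n(X^n_s)$, $\nabla f$, $\triangle f$, $\nabla\sigma$, $\triangle\sigma$, and a local-martingale part with zero expectation. The key structural point, as in (\ref{S1}), is that after summing over the appropriate indices the ``bad'' term $\sum\langle\bnpti\,(\cdot),(\cdot)\rangle$ is controlled using (\ref{ggg}) and (\ref{supderbphi}): writing $b_n=\phi_n b$ and expanding $\nabla^2 b_n$ one gets a term $\phi_n\langle\nabla b(X^n_s)(\cdot),(\cdot)\rangle\leq -K\phi_n\vert\cdot\vert^2\leq 0$ plus terms containing $\nabla\phi_n$, $\nabla^2\phi_n$ paired against $b$ and $\nabla b$; by (\ref{supderbphi}) the factors $\vert\langle b,\partial_L\phi_n\rangle\vert$ are bounded uniformly in $n,x$, so all these contributions are bounded by $M_2\,(\text{something})$ uniformly in $n$. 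The genuinely new feature compared to Lemma \ref{firstderiv} is the inhomogeneous term $\triangle b_n(X^n_s)\Dnrtj\Ddnkrt$ of the ``second chain-rule'' type: here $\vert\triangle b_n(X^n_s)\vert$ grows polynomially in $\vert X^n_s\vert$ by (\ref{roshdbn}), but it is supported on $\{\vert X^n_s\vert\leq 2n^\xi\}$, and more importantly it is multiplied only by first derivatives $D X^n$, whose $p$-moments are already uniformly bounded by (\ref{karand}); using Young's/H\"older's inequality to absorb this against a small multiple of $\vert D^2X^n\vert^p$ plus a term involving $\E[\vert\triangle b_n(X^n_s)\vert^{q}\,\vert DX^n_s\vert^{2q}\vert D X^n_s\vert^{2q}]$ controlled by Lemma \ref{xlp} (all moments of $X^n$ finite uniformly) and Lemma \ref{firstderiv}, we land on an estimate of the form
\begin{equation*}
\E\Big[\vert \DDnkrt\vert^p\Big]\leq C_p + \alpha''_p\int_{\tau\vee r}^{t}\E\Big[\vert \DDnkrs\vert^p\Big]ds,
\end{equation*}
with $C_p$ and $\alpha''_p$ independent of $n,\tau,r,t$. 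Gronwall then closes the bound.

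The main obstacle I anticipate is precisely handling the $\triangle b_n$ term: one must make sure that the polynomial growth in (\ref{roshdbn}), evaluated at $X^n_s$, does not destroy the $n$-uniformity. The resolution rests on two facts already available — that $\sup_n\sup_t\E[\vert X^n_t\vert^m]<\infty$ for every $m$ (from the proof of the second Lemma, via (\ref{ssup})), and that $\sup_n\sup_t\E[\vert DX^n_t\vert^m]<\infty$ for every $m$ (which follows from Lemma \ref{firstderiv} applied with exponent $m$ in place of $p$) — so that, after one more H\"older split, $\E[\vert\triangle b_n(X^n_s)\vert^{p'}\vert DX^n_s\vert^{2p'}\vert DX^n_s\vert^{2p'}]$ is bounded uniformly in $n$. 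A minor bookkeeping point is that the second derivative lives on $[0,T]^2\times\Omega$, so one integrates the resulting bound in $(\tau,r)$ over $[0,T]^2$ to get the $H^{\otimes 2}$-norm estimate; since the per-point bound is uniform this is immediate. Finally, having (\ref{karand}) at the second level, almost sure convergence $X^n_t\to X_t$, and the closability machinery of Lemma 1.2.3 in \cite{Nualart06}, we conclude $X_t\in\D^{2,p}$ for all $p>1$ and that $\DD X^i_t$ satisfies the stated linear SDE (by passing to the limit in the equation for $\DDnkrt$, using that $b_n\to b$, $\nabla b_n\to\nabla b$, $\triangle b_n\to\triangle b$ locally uniformly on the sets where $X_t$ lives). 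The higher-order statement $X_t\in\D^\infty$ then follows by the obvious induction, each level being structurally identical with more (but always first-or-lower order, already-bounded) inhomogeneous terms.
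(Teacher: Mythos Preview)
Your proposal is correct and follows essentially the same route as the paper: It\^o's formula on $\vert \DD (X^n_t)^i\vert^p$, control of the quadratic form $\langle \nabla b_n(X^n_s)\,\cdot,\cdot\rangle$ via monotonicity (\ref{ggg}) and the cutoff bound (\ref{supderbphi}) exactly as in (\ref{J1}), the polynomial estimate (\ref{roshdbn}) on the new inhomogeneity $\triangle b_n$, Young's inequality (\ref{young}) to separate powers, then the already-established uniform bounds (\ref{ssup}) and (\ref{karand}) plus Gronwall and Lemma~1.2.3 in \cite{Nualart06}. One small slip to clean up: the drift quadratic form carries $\nabla b_n$, not $\nabla^2 b_n$, so its expansion is just $\phi_n\nabla b + b\otimes\nabla\phi_n$ (no $\nabla^2\phi_n$ and no $\nabla b$-against-$\nabla\phi_n$ cross terms arise there); the second derivatives of $b_n$ appear only in the $\triangle b_n$ inhomogeneity, which you handle correctly via (\ref{roshdbn}).
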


\begin{proof}
Since $X^n_{t}\in\D^\infty$,  by Result \ref{solindinf} for $\tau_0 := \tau \vee r$ we have

$\begin{aligned}
\DD (X_t^n)^i & =A_{n, \tau, r}^{ij} \\
&+\int_{\tau_0}^{t}\Big[\langle \nabla \sigma^i_l(X^n_{s}),\DD X_s^n \rangle +\triangle \sigma_l^i(X^n_{s})\Dtauk X^n_{s}\Dnrsj \Big] dW^l_{s} \\
& + \int_{\tau_0}^{t} \langle \bnpsi+\fpsi, \DD X_s^n \rangle ds \\
& +\int_{\tau_0}^{t}\Big[\triangle b_n^i(X^n_{s})+\triangle f^i(X^n_{s})\Big]\Dtauk X^n_{s} \Dnrsj ds,                 
\end{aligned}$\\
where
\begin{equation*}
A^{ij}_{n, \tau, r}=\langle\nabla \sigma_j^i(X^n_{r}), \Dtauk X^n_{r}\rangle+\sum_{l=1}^{d}\langle \nabla \sigma^i_l(X^n_{\tau}), D_{r}^j X^n_{\tau}\rangle,
\end{equation*}
and $D_{\tau}X^n_{r}=0$ for $\tau > r$. Similarly we have
$D_{r}X^n_{\tau}=0$ for $\tau < r$.
 By Lemma 1.2.3 in \cite{Nualart06}, it remains only to find some $c_2 > 0$ such that 
\begin{equation}\label{karandd}
\sup_n \E\Big[ \Vert D^{j,k}X_t^n\Vert^{p}_{H \otimes H} \Big] < c_2. 
\end{equation}
By It\^o's formula, for every $1 \leq i\leq d$ we have
\begin{equation}\label{itodd}
\E\Big[\vert \DD (X_t^n)^i\vert^p\Big] =
      \E\Big[\vert A^{ij}_{n, \tau, r} \vert^p\Big]+\E \Big[ \int_{\tau}^{t}\gntij \Big(\vert  \DD (X_s^n)^i \vert^p\Big)ds\Big]+\E\Big[M^{ij}_n(t))\Big],
\end{equation}
where
\\
$\begin{aligned}
\gntij \Big( \vert \DD (X_s^n)^i\vert^p\Big) & = p\vert  \DD (X_s^n)^i \vert^{p-2}I_1 + \frac{p}{2} \vert  \DD (X_s^n)^i \vert^{p-2}\sum_{l=1}^{d} I_2(l) \nonumber\\
& +\frac{p(p-2)}{2} \vert  \DD (X_s^n)^i \vert^{p-4}I_3 , 
\end{aligned}$\\
in which \\
$
\begin{aligned}
I_1 & := \DD (X_s^n)^i \Big(\langle \bnpsi+\fnpsi, \DD X_s^n \rangle \Big) \\
& +\Big[\triangle b_n^i(X^n_{s})+\triangle f^i(X^n_{s})\Big]\Dtauk X^n_{s}\Dnrsj,
\end{aligned}$\\
\begin{center}
$I_2(l):=\Big[\vert  \triangle \sigma_l^i(X^n_{s})\Dtauk X^n_{s}\Dnrsj   \vert +\vert \langle \nabla \sigma^i_l(X_{s}^n), \DD X_s^n \rangle\vert\Big]^2 ,$
\end{center}
\begin{center}
$I_3:=\vert
\DD (X_s^n)^i\Big(\triangle \sigma_l^i(X^n_{s})\Dtauk X^n_{s}\Dnrsj +\langle \nabla \sigma^i_l(X_{s}^n), \DD X_s^n\rangle  \Big)\vert^2,$
\end{center}
and    
\begin{equation*} 
M^{ij}_n(t) := \int_{r}^{t} p \vert \DD (X_s^n)^i \vert^{p-2}
 \langle \DD (X_s^n)^i , I_2(l) dW^l_s \rangle.  
\end{equation*}
Notice that by Result \ref{solindinf},  $M^{ij}_n(t)$
is a local martingale and thus $\E[M^{ij}_n(t)]=0$.\\
Now, we are going to find  appropriate upper bounds for $I_1, I_2(l)$ and  $I_3$. As $\sigma$ has bounded derivatives, we can find some $\gamma'_1 > 0$ such that
\begin{align}
& \frac{p}{2} \vert \DD (X_s^n)^i  \vert^{p-2} \sum_{l=1}^{d}I_2(l) +\frac{p(p-2)}{2} \vert \DD X_s^n \vert^{p-4}I_3  \leq  \nonumber\\
    & \gamma'_1 \frac{p(p-1)}{2} \Big( \vert \DD (X_s^n)^i \vert^{p-2}\vert \DD X_s^n \vert^2  +  \vert \DD (X_s^n)^i  \vert^{p-2}\vert \Dnrsj \vert^2 \vert \Dtauk X_s^n \vert^2 \Big). \label{I2}
\end{align}
Also by the boundedness of $f$ and the derivatives of $\sigma$, the polynomial growth of the derivatives of $b$ and (\ref{roshdbn}), there exist some $\gamma'_2 > 0$ and $q >0$ such that

\begin{align}
  p\vert \DD (X_s^n)^i \vert^{p-2} I_1  & = p\vert \DD (X_s^n)^i \vert^{p-2} J_1 +p\vert \DD (X_s^n)^i \vert^{p-2}J_2   \nonumber\\
  &  +p\vert \DD (X_s^n)^i \vert^{p-2}\DD (X_s^n)^i \langle \fnpsi, \DD X_s^n \rangle \nonumber\\ 
 & \leq p\vert \DD (X_s^n)^i \vert^{p-2} J_1 \nonumber\\
 &+ \gamma'_2 p \vert \DD (X_s^n)^i \vert^{p-2}\vert \Dtauk X_s^n \vert^{2}\vert \Dnrsj \vert^2(1+\vert X_s^n \vert^{p_{_{2}}})^2 \nonumber\\
 & +p \gamma'_2 \vert \DD (X_s^n)^i  \vert^{p}+p\gamma'_2 \vert \DD (X_s^n)^i  \vert^{p-2}  \vert \DD (X_s^n)^i  \vert^{2},\label{I1}
\end{align}
where
\begin{center}
$ J_1:=  \DD (X_s^n)^i \langle \bnpsi ,\DD X_s^n \rangle ,$
\end{center}
and
\begin{center}
$ J_2:= \DD (X_s^n)^i \Big( \Big[\triangle b_n^i(X^n_{s})+\triangle f^i(X^n_{s})\Big] \Dtauk X^n_{s} \Dnrsj\Big)$
\end{center}
By (\ref{ggg}) and   (\ref{supderbphi}), for every $0 \leq s \leq  T$ we have

\begin{align} 
   \sum_{i=1}^{d} J_1 & =  \langle \bnps \DD X_s^n,\DD X_s^n \rangle =\phi_{n}(X_{s}^{n})\langle \nabla b(X_{s}^{n}) \DD X_s^n,\DD X_s^n  \rangle    \nonumber\\
&+\langle  \langle b(X_{s}^{n}),\nabla \phi_{n}(X_s^n) \rangle \DD X_s^n,\DD X_s^n \rangle  \nonumber\\
& \leq (-K \phi_{n}(X_{s}^{n})+M_1) \vert \DD X_s^n \vert^2  \leq M_1 \vert\DD X_s^n \vert^2.                  \label{J1}
\end{align}
Now, substitute (\ref{I1}) and (\ref{I2}) in (\ref{itodd}), sum up on $i$ and then use (\ref{J1}) and (\ref{lpl}) to derive 
\begin{align} 
\sum_{i=1}^{d}& \E\Big[\vert  \DD (X_t^n)^i\vert^p\Big]  = \nonumber\\
&\sum_{i=1}^{d} \E\Big[\vert A^{ij}_{n, \tau, r} \vert^p\Big]  + p(M_1+2d\gamma'_2 + d\gamma'_1 \frac{p(p-1)}{2}) \int_{\tau_0}^{t}\E\Big[\vert \DD X_s^n \vert^{p}\Big]ds\nonumber\\
 & +\sum_{i=1}^{d} \gamma'_2 p\int_{\tau_0}^{t}\E\Big[ \vert \DD (X_s^n)^i \vert^{p-2}\vert \Dtauk X_s^n \vert^{2}\vert \Dnrsj \vert^2(1+\vert X_s^n \vert^{p_{_{2}}})^2  \Big] ds\nonumber\\
  & +\sum_{i=1}^{d} \gamma'_1 \frac{p(p-1)}{2}\int_{\tau_0}^{t}\E\Big[ \vert \DD (X_s^n)^i  \vert^{p-2}\vert \Dnrsj \vert^2 \vert \Dtauk X_s^n \vert^2 \Big]ds. \label{main}
\end{align}\\
To bound the terms in the right hand side of the above inequality, we need the following version of the Young's inequality.
For $p \geq 2$ and for all $a,c$ and $\triangle_1
>0$ we have:
\begin{equation}\label{young}
a^{p-2}c^2 \leq \triangle_1^2
\frac{p-2}{p}a^p+\frac{2}{p\triangle_1^{p-2}}c^p.
\end{equation}
Using (\ref{young}) with $\triangle_1=1$ and $a=\vert \DD (X_s^n)^i  \vert$ we find some bounds for the last four terms in (\ref{main}) which depend only on $\int_{\tau_0}^{t}\E\Big[\vert \DD X_s^n  \vert^p\Big]ds$ and some terms which could be bounded by a constant. So for the last term in (\ref{main}) we have  

$\begin{aligned} 
 & \sum_{i=1}^{d} \gamma'_1 \frac{p(p-1)}{2}\int_{\tau_0}^{t}\E\Big[ \vert \DD (X_s^n)^i  \vert^{p-2}\vert \Dnrsj \vert^2 \vert \Dtauk X_s^n \vert^2 \Big]ds \leq \\
 &  d\gamma'_1 \int_{\tau_0}^{t}\Big(\frac{(p-1)(p-2)}{2}\E\Big[\vert \DD X_s^n  \vert^p\Big]+(p-1)\E\Big[\vert \Dnrsj \vert^p \vert \Dtauk X_s^n \vert^p \Big]\Big)ds.    
\end{aligned}$\\  
and for the third summand in (\ref{main}) we have 

$\begin{aligned} 
 & \sum_{i=1}^{d} \gamma'_2 p\int_{\tau_0}^{t}\E\Big[ \vert \DD (X_s^n)^i  \vert^{p-2}\vert \Dtauk X_s^n \vert^{2}\vert \Dnrsj \vert^2(1+\vert X_s^n \vert^{p_{_{2}}})^2 \Big]ds \leq  \\
 & d\gamma'_2 \int_{\tau_0}^{t}\Big((p-2) \E\Big[\vert \DD X_s^n \vert^p\Big]+2\E\Big[\vert \Dtauk X_s^n \vert^{p}\vert \Dnrsj \vert^p(1+\vert X_s^n \vert^{p_{_{2}}})^p\Big]\Big)ds.    
\end{aligned}$\\ 
Substituting these bounds in the right hand side of (\ref{itodd}) and using (\ref{ssup}), (\ref{karand}) and (\ref{lp}), we can find
some positive constants $c_1(p)$ and $c_2(p)$ such that  

\begin{equation*}
\E\Big[\vert  \DD X_s^n \vert^p\Big] \leq 2^{\frac{p}{2}-1} \sum_{i=1}^{d} \E\Big[\vert A_{n, \tau, r}^{ij} \vert^p\Big]+c_2(p) +  c_1(p)\int_{\tau_0}^{t} \E\Big[\vert  \DD X_s^n \vert^p \Big]ds. 
\end{equation*}
Now, from (\ref{karand}), (\ref{ssup}) and  the definition of $ A_{n, \tau, r}^{ij}$ (in which we have used the boundedness of the derivatives of $\sigma$),  Gronwall's inequality gives us  (\ref{karandd}). 
\end{proof}
In the same way, one can easily show that for every multiindex $\alpha$
\begin{equation}\label{supalld}
\sup_{n} \E(\rVert D^\alpha X^{n}_{t} \rVert^p_{H^{\otimes
\alpha}}) < \infty
\end{equation}
and then by Lemma 1.2.3 in \cite{Nualart06} deduce the following theorem.
\begin{thm}
The SDE (\ref{equa}) has a unique strong solution in $\D^{\infty}$.
\end{thm}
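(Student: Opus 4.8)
The plan is to bootstrap from the three specific cases already established—Lemma \ref{xlp} (the solution lies in $L^p(\Omega)$ for all $p$), Lemma \ref{firstderiv} ($X_t \in \D^{1,p}$ with a uniform bound $\sup_n \sup_t \E[\Vert \Dnt\Vert_H^p] \le c_p$), and Lemma \ref{d2} ($X_t \in \D^{2,p}$ with the analogous uniform bound)—to the general statement $X_t \in \D^{\infty} = \bigcap_{k,p}\D^{k,p}$. The essential point is that we already know (Result \ref{solindinf}) each approximating solution $X^n_t$ lies in $\D^{\infty}$, so $D^\alpha X^n_t$ is a well-defined process for every multiindex $\alpha$ and satisfies a linear SDE obtained by formally differentiating (\ref{equan}) $|\alpha|$ times. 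It therefore suffices to prove the uniform estimate (\ref{supalld}), namely $\sup_n \E[\Vert D^\alpha X^n_t\Vert^p_{H^{\otimes |\alpha|}}] < \infty$ for every $p>1$ and every multiindex $\alpha$, and then invoke Lemma 1.2.3 of \cite{Nualart06}: since $X^n_t \to X_t$ in $L^p(\Omega)$ (established in the previous section) and the $\D^{k,p}$-norms of $X^n_t$ are bounded uniformly in $n$, the limit $X_t$ belongs to $\D^{k,p}$ for every $k$ and $p$, hence to $\D^\infty$; uniqueness of the strong solution was already proved in Lemma \ref{xlp}.

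The estimate (\ref{supalld}) I would prove by induction on $m = |\alpha|$, the cases $m=1,2$ being exactly (\ref{karand}) and (\ref{karandd}). For the inductive step, fix $\alpha$ with $|\alpha| = m$ and apply It\^o's formula to $\vert D^\alpha (X^n_t)^i \vert^p$ as in the proofs of Lemmas \ref{firstderiv} and \ref{d2}. Differentiating the SDE for $X^n$ $m$ times by the Leibniz/Fa\`a di Bruno rule produces: a leading linear term $\langle \nabla b_n(X^n_s) + \nabla f(X^n_s), D^\alpha X^n_s\rangle\, ds + \langle \nabla\sigma_l(X^n_s), D^\alpha X^n_s\rangle\, dW^l_s$ whose structure is identical to the lower-order cases, plus a remainder consisting of finitely many terms each of which is a product of higher derivatives $\partial_\beta b_n$, $\partial_\beta f$, $\partial_\beta \sigma$ (with $2 \le |\beta| \le m$) evaluated at $X^n_s$, multiplied by products of Malliavin derivatives $D^{\gamma_1}X^n_s \cdots D^{\gamma_k}X^n_s$ of \emph{strictly lower} order $|\gamma_i| < m$ with $\sum |\gamma_i| = m$. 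The leading term is handled exactly as before: the monotonicity bound (\ref{ggg}) together with (\ref{supderbphi}) kills the $\nabla b_n$ contribution up to the harmless constant $M_1$, as in (\ref{S1}) and (\ref{J1}), and the $\sigma$ and $f$ terms are dominated using their bounded derivatives and Young's inequality (\ref{young}), producing a term of the form $c_1(p,m)\int_{\cdot}^t \E[\vert D^\alpha X^n_s\vert^p]\,ds$. The remainder terms, after applying Young's and H\"older's inequalities, are bounded by $\int \E[\vert D^\alpha X^n_s\vert^p]\,ds$ plus expectations of products of powers of lower-order Malliavin derivatives and of powers of $(1+\vert X^n_s\vert^{p_l})$; by the inductive hypothesis, (\ref{ssup}), and H\"older, each such expectation is finite uniformly in $n$ and $s$. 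One then arrives at $\E[\vert D^\alpha X^n_t\vert^p] \le (\text{const uniform in }n) + c_1 \int \E[\vert D^\alpha X^n_s\vert^p]\,ds$, and Gronwall's inequality closes the induction.

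The main obstacle is purely bookkeeping: organizing the combinatorial explosion of terms generated by differentiating the SDE $m$ times and verifying that \emph{every} remainder term factors as (a derivative of a coefficient at $X^n_s$) times (a product of Malliavin derivatives of order $< m$), so that the inductive hypothesis genuinely applies and no term of top order $m$ appears outside the single leading linear term. Once this structural claim is granted, the analytic estimates are routine repetitions of the arguments in Lemmas \ref{firstderiv} and \ref{d2}—monotonicity controls the potentially dangerous $\nabla b_n$ term uniformly in $n$, Young's inequality (\ref{young}) converts mixed powers into pure $p$-th powers, and the uniform $L^p$-bounds (\ref{ssup}) on $X^n_t$ together with the lower-order uniform bounds absorb everything else. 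Since we will not in fact grind through this, it is legitimate to say, as the paper does, that the general case follows "in the same way," and conclude via Lemma 1.2.3 of \cite{Nualart06} that $X_t \in \D^\infty$.
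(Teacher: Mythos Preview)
Your proposal is correct and follows exactly the paper's approach: the paper does not give a detailed proof of the theorem but simply asserts that ``in the same way'' as Lemmas \ref{firstderiv} and \ref{d2} one obtains the uniform bound (\ref{supalld}) for every multiindex $\alpha$, and then invokes Lemma 1.2.3 of \cite{Nualart06}. Your write-up is essentially an explicit unpacking of that one-sentence sketch---the inductive structure, the splitting into the leading linear term (handled via monotonicity as in (\ref{S1}) and (\ref{J1})) and lower-order remainder (handled via the inductive hypothesis, (\ref{ssup}), Young, and H\"older), and the final Gronwall step---so there is nothing to compare.
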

\appendix
\section{Constructing the approximating functions for the drift}
\vskip 0.4 true cm
Here we construct the functions $b_n$ mentioned in section 2. This construction is motivated by Berhanu in \cite[Theorem 2.9.]{Berhanu}.  Assume that $U \subset V$ are two open sets in $\R^d$ with distance $a>0$. For $0 \leq \epsilon \leq a$, define 
$U_{\epsilon}=\{x ; d(x, U) < \epsilon \}$. Then $U_\epsilon = \bigcup_{x \in U} B_\epsilon(x)$ and $U \subseteq U_\epsilon \subseteq V$. Fix $\epsilon$ such that 
$0 < 2\epsilon \leq a$ and let $h^\epsilon(x) $ be the characteristic function of $U_\epsilon$. For $\psi \in C_0^\infty(\R^d)$ with $supp \psi \subseteq  B_1(0)$ and
$\int \psi(x) dx = 1$, set $\psi_\epsilon(x) =
\frac{1}{\epsilon^{d}}\psi(\frac{x}{\epsilon})$. Now consider  the construction
function $$\psi_\epsilon \star h^\epsilon(x)=\int_{\R^d} \psi_\epsilon(y) h^\epsilon(x-y)dy$$ for $0 < 2\epsilon < d$. Since $supp \psi_\epsilon \subseteq B_\epsilon(0)$, $\psi_\epsilon \star h^\epsilon=1$ on $U$ and 
$\psi_\epsilon \star h^\epsilon=0$ outside $U_{2\epsilon}$. Note that 
for each multiindex $\alpha$,

\begin{align}
\partial_\alpha (\psi_\epsilon \star h^\epsilon) (x) & = \int \partial_\alpha ( \psi_\epsilon(y)) h^\epsilon(x-y) dy  = \frac{1}{\epsilon^{d+\vert \alpha \vert}}\int (\partial_\alpha \psi)(\frac{y}{\epsilon}) h^\epsilon(x-y) dy \nonumber \\
& =  \frac{1}{\epsilon^{\vert \alpha \vert}}\int (\partial_\alpha \psi)(z) h^\epsilon(x-\epsilon z) dz \leq \parallel \psi\parallel_\infty \frac{1}{\epsilon^{\vert \alpha \vert}}   \label{consphin}
\end{align}
Now, let $n \geq 1$ and set $U=B_{n^\xi}(0)$, $V=B_{2n^\xi}(0)$ and $\epsilon=n^\xi $. Then the  functions $\phi_n(x) := \psi_{\epsilon} \star h^\epsilon$ satisfy  $\phi_n(x)=1$ on $U$ and $\phi_n(x)=0$ outside $V$. Since $supp \phi_n(x) \subseteq B_{2n^\xi}(0)$, by (\ref{consphin}) and (\ref{roshdb}) for each multiindex $\alpha$ with $\vert \alpha \vert=c \geq 1$, we have \\

$\begin{aligned}
\vert b(x) \partial_\alpha \phi_n(x) \vert & \leq \vert b(x)\chi_{\vert x \vert \leq 2n^\xi} \vert \parallel \psi\parallel_\infty \frac{1}{n^{\xi \vert \alpha \vert}} \\
& \leq \gamma_c (1+ 2^{\xi} n^\xi)\parallel \psi\parallel_\infty \frac{1}{n^{\xi \vert \alpha \vert}} \leq 2^{\xi+1}\gamma_c \parallel \psi\parallel_\infty,
\end{aligned}$\\
and 
\begin{equation*}
\vert \partial_\alpha \phi_n(x) \vert \leq \parallel \psi\parallel_\infty. 
\end{equation*}
\\
\vskip 0.4 true cm
\begin{center}{\textbf{Acknowledgments}}
\end{center}
The authors wish to thank professor Bijan Z. Zangeneh  for insightful suggestions leading to a much improved version of the paper.  \\
\vskip 0.4 true cm

\bigskip
\bigskip


{\footnotesize \pn{\bf First Author}\; \\ {Department of Applied Mathematics, Faculty of Mathematical Sciences}, {Tarbiat Modares University,
P.O. Box 14115-134,}  {Tehran, Iran}\\
{\tt Email: tahmasebi@modares.ac.ir}\\

{\footnotesize \pn{\bf Second Author}\; \\ {Graduate School of Management and Economics}, {Sharif University of Technology,
P.O. Box 11155-9415,} {Tehran, Iran}\\
{\tt Email: zamani@sharif.ir}\\
\end{document}